\newtheorem*{theorem*}{Theorem}
\newtheorem{theorem}{Theorem}[section]
\newtheorem{corollary}{Corollary}[section]
\theoremstyle{remark}
\theoremstyle{definition}
\begin{document}
\begin{frontmatter}

\title{On fractal faithfulness and fine fractal properties of~random
variables with independent $\boldsymbol{Q^*}$-digits}

\author[a]{\inits{M.}\fnm{Muslem}\snm{Ibragim}}
\email{muslemhussen1978@yahoo.com}
\address[a]{National Pedagogical Dragomanov University, \\Pyrogova str.
9, 01030, Kyiv, Ukraine}

\author[a,b]{\inits{G.}\fnm{Grygoriy}\snm{Torbin}\corref{cor1}}
\cortext[cor1]{Corresponding author.}\email{torbin7@gmail.com}
\address[b]{Institute for Mathematics of National Academy
of Sciences, \\Tereshchenkivska str. 3, 01601, Kyiv, Ukraine}


%
%
%

\markboth{M. Ibragim, G. Torbin}{On fractal faithfulness and fine
fractal properties of random variables with independent $Q^*$-digits}

\begin{abstract}
We develop a new technique to prove the faithfulness of the
Hausdorff--Besicovitch dimension calculation of the family $\varPhi(Q^*)$
of cylinders generated by $Q^*$-expansion of real numbers. All known
sufficient conditions for the family $\varPhi(Q^*)$ to be faithful for the
Hausdorff--Besicovitch dimension calculation use different restrictions
on entries $q_{0k}$ and $q_{(s-1)k}$. We show that these restrictions
are of purely technical nature and can be removed.
Based on these new results, we study fine fractal properties of random
variables with independent $Q^*$-digits.
\end{abstract}

\begin{keyword}
Hausdorff--Besicovitch dimension\sep fractals\sep faithful
\xch{Vitali}{Vitaly}
coverings\sep $Q^*$-expansion\sep singularly continuous probability measures
\MSC[2010] 11K55\sep26A30\sep28A80\sep60G30
\end{keyword}


%
\received{20 May 2016}
%
\accepted{3 June 2016}
\publishedonline{9 June 2016}

\end{frontmatter}

\section{Introduction}

Hausdorff measures and the Hausdorff dimension are important tools in
the study of fractals and singularly continuous probability measures.
The determination or even estimation of the Hausdorff dimension of a
set or measure is the crucial problem in fractal analysis, and a lot of
research papers were devoted to these problems. Because of this reason,
many interesting methods for the simplification of the procedure of the
determination of the Hausdorff dimension were invented and developed
during the last 20 years. One approach to such a simplification
consists in some restrictions of admissible coverings. This idea came
from Besicovitch's works and has been used by Rogers and Taylor to
construct comparable net measures \cite{Rogers} as approximations of
the Hausdorff measures. In this paper, we develop this approach via
construction of net coverings that lead to a special family of net
measures, which are more general that comparable ones. We discuss the
notion of faithfulness and nonfaithfulness of the family of cylinders
generated by different systems of numerations for the Hausdorff
dimension calculation.

Let us shortly recall that the \textit{$\alpha$-dimensional Hausdorff measure}
of a set $E\subset[0,1]$ with respect to a given fine family of
coverings $ \varPhi$ is defined by
\[
H^{\alpha} (E, \varPhi)= \lim\limits_{\epsilon\to0 } ~~ \inf\limits
_{|E_{j} |\le\epsilon}~ \sum_{j} |E_{j}|^{\alpha}
= \lim\limits_{\epsilon\to0 } H_{\epsilon}^{\alpha} (E,\, \varPhi),
\]
where the infimum is taken over all at most countable $\epsilon
$-coverings $\{ E_{j} \} $ of $E$, $E_{j} \in\varPhi$.
The nonnegative number
\[
\dim_{H} (E,\, \varPhi)=\inf\big\{ \alpha:\, \, H^{\alpha} (E,\, \varPhi)=0\big\}
\]
\noindent is called the Hausdorff dimension of the set $E\subset[0,1]$
w.r.t.\ the family $\varPhi$.
If $\varPhi$ is the family of all subsets of $[0, 1]$ or $\varPhi$ coincides
with the family of
all closed (open) subintervals of [0,1], then $\dim_{H} (E,\, \varPhi)$
is equal to the classical Hausdorff dimension $\dim_{H} (E)$ of a
subset $E \subset[0,1]$.

A fine covering family $\varPhi$ is said to be a \textit{faithful family
of coverings}~(\textit{nonfaithful family of coverings}) for the
Hausdorff dimension calculation on $[0,1]$ if
\begin{align*}
&\quad\dim_{H} (E,\varPhi)=\dim_{H} (E), ~~~\forall E\subseteq[0,1]\\
&(\mbox{resp.} ~~\exists E\subseteq[0,1]: \dim_{H} (E,\varPhi)\neq\dim
_{H} (E)).
\end{align*}

It is clear that any family $\varPhi$ of comparable net-coverings (\xch{i.e.}{i.,e.},
net-coverings that generate comparable net-measures) is faithful.
Conditions for Vitali coverings to be faithful were studied by many
authors (see, e.g., \cite{AlbeverioTorbin2005, AILT, AKNT1 and 3,
Billingsley61} and \xch{the references}{references} therein). First steps in this direction
have been done by Besicovitch~\cite{Besicovitch}, who proved the
faithfulness of the family of cylinders of a binary expansion. His
result was extended by~Billingsley \cite{Billingsley61} to the family
of $s$-adic cylinders, \xch{by Turbin and Pratsiovytyi \cite{TuP}}{by~Pratsiovytyi \cite{TuP}} to the family of
$Q$-$S$-cylinders, and by Albeverio and~Torbin \cite
{AlbeverioTorbin2005} to the family of $Q^*$-cylinders for the matrices
$Q^*$ with elements $p_{0k}, p_{(s-1)k}$ bounded away from zero.

In all these papers, their authors used essentially the same approach
to prove the faithfulness of the corresponding family of coverings: it
has been shown that there exist positive constants $C$ and $n_0 \in N$
such that, for any $\varepsilon>0$ and for any interval $(a; b)$ with
$b-a < \varepsilon$, there exist at most $n_0$ cylinders from fine
covering families that cover the interval $(a,b)$ and their lengths do
not exceed the value $C(b-a)$. It is rather obvious that such families
$\varPhi$ of cylinders generates comparable Hausdorff measures \cite
{Rogers}, and, therefore, they are faithful for the Hausdorff dimension
calculation. Albeverio et al.\  \cite{AKNT1 and 3} correctly mentioned
that it was rather paradoxical that initial examples of nonfaithful
families of coverings first appeared in the two-dimensional case (as a
result of active studies of self-affine sets during the last decade of
XX \xch{century, see,}{century (see,} e.g.,~\cite{BB}). The family of cylinders of the
classical continued fraction expansion can probably be considered as
the first (and rather unexpected) example of nonfaithful
one-dimensional net-family of coverings \cite{PerTor}. By using approach which has been invented by Yuval Peres to prove the nonfaithfulness of the family of continued fraction cylinders, in \cite{AKNT1 and 3}
the nonfaithfulness of the family $\varPhi(Q_\infty)$ of cylinders of the
$Q_\infty$-ex\-pan\-sion with polynomially decreasing elements~$\{q_i\}$
has been proven. This shows, in particular, that the family of
cylinders of the classical L\"uroth expansion is nonfaithful. Rather
general sufficient conditions for $\varPhi(Q_\infty)$ to be faithful were
also obtained in \cite{AKNT1 and 3, NT_TVIMS12}.

In 2012, Ibragim and Torbin \cite{IbragimTorbin2} developed a new
method to prove the faithfulness of the family of cylinders of
$Q^*$-expansion for the matrices $Q^*$ with elements $p_{0k},
p_{(s-1)k}$ not tending to zero ``too quickly.'' In particular, they
proved the following result.\smallskip

\begin{theorem*}
Let $q_k^*:=\max\{q_{0k},q_{1k},\ldots,q_{s-1k}\}$. If
\begin{equation}\label{teorema 2 o ddoveritelnosti}
\left\{
\begin{array}{l}
\lim\limits_{k\to\infty}\frac{\ln{q_{0,k}}}{\ln
(q_1^*q_2^*\ldots q_k^*)}=0,\\[6pt]
\lim\limits_{k\to\infty}\frac{\ln{q_{s-1,k}}}{\ln(q_1^*q_2^*\ldots q_k^*)}=0,
\end{array}
\right.
\end{equation}
then\vspace{-3pt}
\[
\dim_H(E)=\dim_H\bigl(E,\varPhi(Q^*)\bigr),\quad \forall E\subset[0,1].
\]
\end{theorem*}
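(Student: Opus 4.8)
\emph{The plan.} The inequality $\dim_H(E,\varPhi(Q^*))\ge\dim_H(E)$ is immediate, since $\varPhi(Q^*)$ is a subfamily of the family of all intervals and shrinking the class of admissible coverings can only increase the Hausdorff measure. Everything therefore reduces to the opposite bound. Writing $\alpha=\dim_H(E)$, I would fix exponents $\alpha<\beta_1<\beta$ and exploit that $H^{\beta_1}(E)=0$: there exist interval coverings $\{I_j\}$ of $E$ with $|I_j|\to0$ and $\sum_j|I_j|^{\beta_1}$ arbitrarily small. The whole argument then rests on a single \emph{covering lemma}: every short interval $I$ admits a covering $\mathcal C(I)\subset\varPhi(Q^*)$ by cylinders of length at most $|I|$ with
\[
\sum_{\Delta\in\mathcal C(I)}|\Delta|^{\beta}\le C\,|I|^{\beta_1}.
\]
Granting this, summation over $j$ gives $H^{\beta}(E,\varPhi(Q^*))\le C\sum_j|I_j|^{\beta_1}$, hence $H^{\beta}(E,\varPhi(Q^*))=0$ and $\dim_H(E,\varPhi(Q^*))\le\beta$; letting $\beta_1\uparrow$ and $\beta\downarrow\alpha$ finishes the proof.

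To build $\mathcal C(I)$ for $I=(a,b)$, I would take the deepest cylinder $\Delta=\Delta_{a_1\ldots a_n}$ containing both endpoints, so that $a$ and $b$ first separate at rank $n+1$ into sub-cylinders with digits $j<j'$. The cylinders $\Delta_{a_1\ldots a_n l}$ with $j<l<j'$ lie entirely inside $I$, have length at most $|I|$, and number at most $s$; they form a bounded ``middle'' part of the cover whose $\beta$-sum is at most $s\,|I|^{\beta_1}$. What remains are the \emph{right fringe} (the part of the cylinder of $a$ to the right of $a$) and the symmetric \emph{left fringe} for $b$. Covering the right fringe exactly produces the countable family $\{\Delta_{a_1\ldots a_{m-1}l}:m>n+1,\ l>a_m\}$, and likewise the left fringe with $l<a_m$; these cylinders accumulate at the endpoint and have lengths $|\Delta_{a_1\ldots a_{m-1}}|\,q_{l,m}\to0$, so the $\varepsilon$-smallness requirement is automatically met.

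The heart of the matter---and the step where the classical argument breaks down---is the estimate of the two fringe sums. When $q_{0k}$ and $q_{(s-1)k}$ are bounded away from zero one covers $I$ by a \emph{bounded} number of cylinders of length comparable to $|I|$, the net measures are comparable, and faithfulness is automatic; once these entries may tend to zero this fails, and a fringe may genuinely require unboundedly many cylinders of widely differing sizes. The key observation is that a point close to the right boundary of $\Delta$ has subsequent digits predominantly equal to $s-1$, so the partial products $|\Delta_{a_1\ldots a_{m-1}}|$ governing the right fringe are dominated by factors $q_{(s-1)j}$, and symmetrically the left fringe is governed by factors $q_{0j}$. Condition~\eqref{teorema 2 o ddoveritelnosti} says precisely that $\ln q_{0k}=o(\ln r_k)$ and $\ln q_{(s-1)k}=o(\ln r_k)$ with $r_k=q_1^*\cdots q_k^*$; equivalently, for every $\theta>0$ one has $q_{0k}\ge r_k^{\theta}$ and $q_{(s-1)k}\ge r_k^{\theta}$ for all large $k$. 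This lower bound forces the effective exponent of the fringe series to be $\beta(1-o(1))$, which for large $n$ exceeds $\beta_1$, so the geometric-type series telescopes to $\sum|\Delta|^{\beta}\le C\,\rho^{\beta_1}$ for a fringe covering a region of length $\rho\le|I|$. I expect this fringe estimate---and in particular the bookkeeping that converts the subexponential lower bounds on the extreme entries into the spectral gap $\beta-\beta_1$---to be the main obstacle; the remainder is assembly of the middle and fringe parts and the passage to the limit.
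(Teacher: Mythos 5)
Your skeleton is correct, and it is essentially the same covering scheme this paper uses for its Theorem~\ref{Th2.1}: the inequality $\dim_H(E,\varPhi(Q^*))\ge\dim_H(E)$ is trivial, and the converse reduces to covering a single short interval $(a,b)$ by the next-rank ``middle'' cylinders of the maximal containing cylinder (at most $s$ of them) together with two countable staircase families accumulating at $a$ and at $b$, with the exponent drop $\beta_1<\beta$ absorbing all constants. But the one step that carries all the content --- the fringe estimate $\sum_{\Delta\in\mathcal{C}(I)}|\Delta|^{\beta}\le C|I|^{\beta_1}$ --- is asserted, not proved; you yourself defer it as ``the main obstacle,'' so what you have is a plan, not a proof. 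Be aware also that this paper never proves the quoted statement: it cites it from \cite{IbragimTorbin2}, and its own Theorem~\ref{Th2.1} establishes faithfulness under the condition $S(\delta)<\infty$, which is \emph{not} implied by condition~(\ref{teorema 2 o ddoveritelnosti}); for instance, if $q_1^*\cdots q_k^*=e^{-\sqrt{k}}$ with extreme entries of order $k^{-1/2}$, then condition~(\ref{teorema 2 o ddoveritelnosti}) holds because $\ln k=o(\sqrt{k})$, yet $S(\delta)=\sup_m\sum_k e^{-\delta(\sqrt{m+k}-\sqrt{m})}=+\infty$. So you cannot discharge the fringe sums the way the paper does (via $\sum_k(\prod_i q_{n_j+i})^{\delta}\le S(\delta)$), and condition~(\ref{teorema 2 o ddoveritelnosti}) must enter through a genuinely different argument --- exactly the argument you skipped.

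Moreover, the heuristic you offer for the missing step misidentifies the mechanism. You claim that a point near the right boundary has digits predominantly $s-1$, so the prefix products $|\varDelta_{\alpha_1(a)\ldots\alpha_{m-1}(a)}|$ governing the right fringe are ``dominated by factors $q_{(s-1)j}$.'' But at any rank $m$ with $\alpha_m(a)=s-1$ the right fringe is \emph{empty} (there is no digit $l>\alpha_m(a)$), so in precisely that regime there is nothing to estimate. The dangerous regime is the opposite one: a long stretch of ranks with $q_{\alpha_m(a),m}$ close to $1$ and $\alpha_m(a)<s-1$, where the prefix products $u_m$ barely decay and every rank contributes a fringe cylinder of comparable size; writing $\gamma=\beta-\beta_1$, the natural bound collapses to $\sum_m(u_m-u_{m+1})^{\gamma}$, which can be arbitrarily large even though $\sum_m(u_m-u_{m+1})\le|I|$. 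Where condition~(\ref{teorema 2 o ddoveritelnosti}) really enters is this: since $s-1$ is always an admissible right-fringe digit (and $0$ a left-fringe digit), one has $q_{(s-1),m}\le 1-q_{\alpha_m(a),m}$ at every rank where the right fringe is nonempty; the hypothesis gives $q_{(s-1),m}\ge(q_1^*\cdots q_m^*)^{\theta_m}$ with $\theta_m\to 0$, which caps the number $K$ of ranks in such a stretch by roughly $(q_1^*\cdots q_m^*)^{-\theta}$, and the fixed power $u_m^{\gamma}\le(q_1^*\cdots q_{m-1}^*)^{\gamma}$ of the prefix then beats the $o(1)$ power coming from $K^{1-\gamma}$. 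Turning this into a proof still requires a stretch decomposition, handling of the finitely many initial ranks where $\theta_m$ is not yet below $\gamma$, the symmetric left-fringe argument with $q_{0,m}$, and uniformity over the endpoint $a$; none of this is in your text, and the sentence about an ``effective exponent $\beta(1-o(1))$'' is not a substitute for it.
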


This theorem, a generalization of \cite{AlbeverioTorbin2005}, extended
the family of faithful coverings generated by cylinders of
$Q^*$-expansion. In particular, we can easily apply this theorem to
prove the faithfulness of the family of cylinders generated by the
matrix\vspace{-3pt}
\[
Q^*=
\begin{pmatrix}

\frac1{10}&\ldots&\frac1{10k}&\ldots\\[3pt]

{{\frac12}-{\frac1{10}}}&\ldots&\
{{\frac12}-{\frac1{10k}}}&\ldots\\[3pt]

{{\frac12}-{\frac1{10}}}&\ldots&\
{{\frac12}-{\frac1{10k}}}&\ldots\\[3pt]

\frac1{10}&\ldots&\frac1{10k}&\ldots
\end{pmatrix}
.
\]

On the other hand, if $p_{0k}, p_{(s-1)k}$ tend to zero ``too quickly''
(e.g., $s=4$, $q_{0k}=q_{3k}=\frac{1}{10^k}$, $q_{1k}=q_{2k}=\frac
{1}{2} - \frac{1}{10^k}$), then the above theorem does not work.

In the next section, we develop a new approach to prove the
faithfulness of families of coverings and prove essentially new
sufficient conditions for $Q^*$-cylinders to be faithful (we do not
need any information about the boundedness from zero of the elements
$q_{0k}$ and $q_{(s-1)k}$ or any information about the rate of their
convergence to zero).

\section{On new sufficient conditions of fractal faithfulness for the
family of cylinders of $Q^*$-expansions}

\begin{theorem}
\label{Th2.1}
Let $q_k:= \max_{i} q_{ik}$, let
\[
S(m, \delta) := \sum\limits_{k=1}^{\infty} \left( \prod\limits
_{i=m+1}^{m+k}q_i\right)^{\delta},
\]
and let
\[
S(\delta) := \sup\limits_m S(m, \delta).
\]

If
\begin{equation}\label{sufficient cond for Q* faithfulness}
S(\delta)< +\infty, \quad \forall\delta>0,
\end{equation}
then the family $\varPhi(Q^*)$ of cylinders generated by $Q^*$-expansion
of real numbers is faithful for the calculation of the Hausdorff
dimension on the unit interval, that is,
\[
\dim_H E = \dim_H \bigl(E, \varPhi(Q^*)\bigr), \quad \forall E \subset[0,1].
\]
\end{theorem}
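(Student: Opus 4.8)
The plan is to prove the two inequalities separately, reducing the nontrivial one to a single covering estimate that is driven by the summability hypothesis \eqref{sufficient cond for Q* faithfulness}.

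First I would dispose of the easy inequality. Since every $Q^*$-cylinder is an interval, the family $\varPhi(Q^*)$ is a subfamily of the family of all subintervals of $[0,1]$; restricting the admissible coverings can only increase the infimum defining $H^\alpha$, so $H^\alpha(E,\varPhi(Q^*))\ge H^\alpha(E)$ for every $\alpha$ and every $E$, whence $\dim_H(E,\varPhi(Q^*))\ge\dim_H(E)$. It therefore remains to prove the reverse inequality $\dim_H(E,\varPhi(Q^*))\le\dim_H(E)$.

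Second, I would reduce this to the following covering lemma: for every $\alpha>0$ there is a constant $C=C(\alpha,s)<\infty$ such that every interval $I\subset[0,1]$ admits an at most countable covering by $Q^*$-cylinders $\{\Delta_i\}$ with $\sup_i|\Delta_i|\le|I|$ and $\sum_i|\Delta_i|^\alpha\le C|I|^\alpha$. Granting the lemma, any $\eps$-covering of $E$ by arbitrary intervals can be refined, interval by interval, into a $\varPhi(Q^*)$-covering whose $\alpha$-sum is at most $C$ times larger, giving $H^\alpha(E,\varPhi(Q^*))\le C\,H^\alpha(E)$. Hence $\dim_H E<\alpha$ forces $H^\alpha(E,\varPhi(Q^*))=0$ and so $\dim_H(E,\varPhi(Q^*))\le\alpha$; letting $\alpha\downarrow\dim_H E$ finishes the argument. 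For the construction, given $I=(a,b)$ let $\Delta^*$ be the cylinder of minimal rank containing $I$; its two endpoints then lie in different sub-cylinders, so $I$ splits into finitely many ``middle'' cylinders lying entirely inside $I$ (at most $s$ at the top rank, each of length $\le|I|$) together with the two partial pieces containing $a$ and $b$. Each partial piece I would cover by a ``fan'': descending rank by rank along the cylinders that contain the endpoint and peeling off, at each step, the at most $s-1$ sibling cylinders lying between the endpoint and the boundary of the current cylinder. All peeled cylinders lie inside $I$ and are pairwise disjoint.

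Finally comes the $\alpha$-sum estimate, which is the heart of the matter. I would split the ranks of each fan at the threshold $m$ where the cylinder containing the endpoint first drops below $|I|$. For ranks beyond $m$ the peeled lengths are dominated by $|I|\,\prod_{i=m+1}^{m+k}q_i$, so their $\alpha$-sum is bounded by $(s-1)|I|^\alpha S(m,\alpha)\le(s-1)|I|^\alpha S(\alpha)$, finite by \eqref{sufficient cond for Q* faithfulness} with $\delta=\alpha$; the uniformity of this bound in the base point $m$ is exactly what $S(\alpha)=\sup_m S(m,\alpha)<\infty$ supplies. The main obstacle is the remaining, intermediate regime of ranks below $m$, where the cylinders have not yet shrunk below $|I|$: there one cannot anchor the geometric tail at scale $|I|$, and a naive estimate reproduces the length of $\Delta^*$, which may be far larger than $|I|$. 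This is precisely the situation of a short interval straddling a deep cylinder boundary, and it is the reason the earlier results needed $q_{0k},q_{(s-1)k}$ bounded away from $0$. I expect to control this regime by combining the disjointness of the peeled cylinders (whose total length is $\le|I|$) with the summability of the tail products uniformly in the base point, which forces the peeling to be negligible outside a controlled range of scales; this is the step where hypothesis \eqref{sufficient cond for Q* faithfulness} is indispensable and where the restrictions on $q_{0k},q_{(s-1)k}$ can be removed.
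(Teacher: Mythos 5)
Your covering construction is essentially the paper's own (the deepest cylinder $\Delta^*$ containing $I$, the at most $s$ ``middle'' cylinders, and the two fans of peeled sibling cylinders around the endpoints), but the estimate you aim for is the wrong target, and the gap you admit in your last paragraph is real: the tools you name cannot close it. Your reduction lemma demands fixed-exponent comparability, $\sum_i|\Delta_i|^\alpha\le C(\alpha,s)\,|I|^\alpha$; this is exactly the Rogers--Taylor ``comparable net measures'' statement that the earlier literature extracted from the extra assumptions on $q_{0k},q_{(s-1)k}$, and hypothesis \eqref{sufficient cond for Q* faithfulness} is not designed to deliver it. Concretely, in the intermediate regime the peeled cylinders are disjoint subintervals of $I$, but disjointness does not control an $\alpha$-sum: $N$ disjoint pieces of length $|I|/N$ have $\alpha$-sum $N^{1-\alpha}|I|^\alpha$. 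And what \eqref{sufficient cond for Q* faithfulness} gives about the number $K$ of intermediate ranks (those $k$ for which the tail product still satisfies $\prod_{i=2}^{k+1}q_{n_j+i}\cdot|\Delta^*|>|I|$) is only that each such rank contributes more than $(|I|/|\Delta^*|)^\delta$ to a sum bounded by $S(\delta)$, i.e. $K\le S(\delta)\,(|\Delta^*|/|I|)^{\delta}$. Feeding this back in bounds the intermediate contribution by $(s-1)S(\delta)\,|\Delta^*|^{\delta}\,|I|^{\alpha-\delta}$ --- an exponent loss of $\delta$, never $C|I|^{\alpha}$. Since the tail products are strictly positive, $K\to\infty$ as $|I|/|\Delta^*|\to0$, so no constant $C(\alpha,s)$ can come out of this argument, and your lemma remains unproved (and is in any case stronger than what the theorem needs).

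The paper's proof shows that this exponent loss is not an obstacle but the solution: do not prove your lemma, prove its $\delta$-weakened form. Fix $\delta\in(0,\alpha)$. Every peeled cylinder $\Delta$ at fan-rank $k$ satisfies simultaneously $|\Delta|\le|I|$ (it lies inside $I$) and $|\Delta|\le\prod_{i=2}^{k+1}q_{n_j+i}\cdot|\Delta^*|\le\prod_{i=2}^{k+1}q_{n_j+i}$; writing $|\Delta|^{\alpha}=|\Delta|^{\alpha-\delta}\,|\Delta|^{\delta}$ and applying the first bound to the first factor and the second bound to the second factor, then summing over ranks via $S(\delta)$ and over the two fans and the middle cylinders, gives $\sum_i|\Delta_i|^{\alpha}\le\bigl(1+2S(\delta)\bigr)\,s\,|I|^{\alpha-\delta}$ --- with no case distinction between intermediate and deep ranks at all. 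This yields $H^{\alpha}\bigl(E,\varPhi(Q^*)\bigr)\le\bigl(1+2S(\delta)\bigr)\,s\,H^{\alpha-\delta}(E)$, hence $\dim_H\bigl(E,\varPhi(Q^*)\bigr)\le\dim_H(E)+\delta$ for every $\delta>0$; letting $\delta\downarrow0$ and combining with your (correct) easy inequality finishes the proof. The missing idea is precisely this exponent shift: compare $H^{\alpha}(\cdot,\varPhi(Q^*))$ with $H^{\alpha-\delta}(\cdot)$ rather than with $H^{\alpha}(\cdot)$, and conclude at the level of dimensions instead of measures.
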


\begin{proof} It is clear that for the determination of the Hausdorff
dimension of subsets from $[0,1]$ it suffices to consider coverings by
intervals $(a_j, b_j)$, where $a_j$ and $b_j$ belong to a set $A$ that
is dense in $[0,1]$. Let $A$ be the set of all $Q^*$-irrational points,
that is, the set of points that are not end-points of $Q^*$-cylinders
(the $Q^*$-expansion of these points does not contain digits 0 or $s-1$
in a period).

Let $E$ be an arbitrary subset of $[0, 1]$. Let us fix $\varepsilon>0$
and $\alpha>0$. Let $\{E_j\}$ be an arbitrary $\varepsilon$-covering of
the set $E$, $E_j=(a_j, b_j)$, $a_j\in A$, $b_j\in A$,
$|E_j|<\varepsilon$.

For the interval $E_j$, there exists a unique cylinder $\varDelta_{\alpha
_1\alpha_2\ldots\alpha_{n_j}}$ containing~$E_j$ such that any cylinder
of a higher rank does not contain $E_j$. In the case where $a_j$ and
$b_j$ belong to different cylinders of the first rank, we define $\varDelta
_{\alpha_1\alpha_2\ldots\alpha_{n_j}} :=[0, 1]$.

Let us split $\varDelta_{\alpha_1\ldots\alpha_{n_j}}$ on the next rank
cylinders. From the maximality of the rank of the cylinder $\varDelta
_{\alpha_1\alpha_2\ldots\alpha_{n_j}}$ it follows that there exists at
least one point that is an end-point of a cylinder of rank $n_j+1$ and
belongs to the interval $(a_j, b_j)$. It is clear that the point
\[
c_j=\varDelta_{\alpha_1(a_j)\ldots\alpha_{n_j}(a_j)(\alpha
_{n_j+1}(a_j)+1)0\ldots0\ldots}
\]
possesses such properties.

Let $M_0 = M_0(j)$ be a family of cylinders of rank $n_j+1$ belonging
to $(a_j, b_j)$. It is clear that $M_0$ contains less than $s$
cylinders (if the points $a_j$ and~$b_j$ belong to neighboring
cylinders of rank $n_j+1$, then $M_0$ is empty). Therefore, the $\alpha
$-volume of these cylinders does not exceed $s |E_j|^{\alpha}$.

Let $d_j:= \sup M_0 = \varDelta_{\alpha_1(a_j)\ldots\alpha_{n_j}(a_j)
\alpha_{n_j+1}(b_j)~ 0\ldots0\ldots}$.

To cover the set $E_j$ by cylinders from $\varPhi(Q^*)$, let us cover the
sets $(a_j, c_j)$ and $[d_j, b_j)$ separately.
Let us choose $\delta\in(0, \alpha)$.

First, let us estimate the $\alpha$-volume of coverings of the set
$(a_j, c_j)$.

Let $L_1=L_1(j)$ be the family of all cylinders of rank $n_j+2$
belonging to the cylinder $\varDelta_{\alpha_1(a_j)\alpha_2(a_j)\ldots
\alpha_{n_j+1}(a_j)}$ and to the set $(a_j, c_j]$.
Let
\[
A_1=A_1(j):= \bigl\{i: i\in\bigl\{ \alpha_{n_j+2}(a_j)+1,\dots, s-1 \bigr\} \bigr\}.
\]

The corresponding $\alpha$-volume of these cylinders is equal to
\begin{align*}
&\sum\limits_{i \in A_1} |\varDelta_{\alpha_1(a_j)\alpha_2(a_j)\ldots\alpha
_{n_j+1}(a_j) ~ i}|^{\alpha} \leq s \cdot\max\limits_{i \in A_1}
|\varDelta_{\alpha_1(a_j)\alpha_2(a_j)\ldots \alpha_{n_j+1}(a_j)~
i}|^{\alpha}\\
&\quad= s \cdot\max\limits_{i \in A_1} \bigl(|\varDelta_{\alpha_1(a_j)\alpha
_2(a_j)\ldots \alpha_{n_j+1}(a_j)~ i}|^{\alpha-\delta} |\varDelta_{\alpha
_1(a_j)\alpha_2(a_j)\ldots\alpha_{n_j+1}(a_j)~ i}|^{\delta}\bigr)\\
&\quad\leq s |E_j|^{\alpha-\delta} \cdot\max\limits_{i \in A_1} |\varDelta
_{\alpha_1(a_j)\alpha_2(a_j)\ldots\alpha_{n_j+1}(a_j)~ i}|^{\delta}\\
&\quad\leq s |E_j|^{\alpha-\delta} \cdot\bigl(q_{n_j+2} ~ |\varDelta_{\alpha
_1(a_j)\alpha_2(a_j)\ldots\alpha_{n_j}(a_j)}|\bigr)^{\delta} \leq s |E_j|
^{\alpha-\delta} \cdot q_{n_j+2}^{\delta}.
\end{align*}

Let $L_2=L_2(j)$ be the family of all cylinders of rank $n_j+3$
belonging to the cylinder $\varDelta_{\alpha_1(a_j)\alpha_2(a_j)\ldots
\alpha_{n_j+2}(a_j)}$ and to the set $(a_j, c_j]$. Let
\[
A_2=A_2(j):= \bigl\{i: i\in\bigl\{ \alpha_{n_j+3}(a_j)+1, \dots, s-1 \bigr\} \bigr\}.
\]

The corresponding $\alpha$-volume of these cylinders is equal to
\begin{align*}
&\sum\limits_{i \in A_2} |\varDelta_{\alpha_1(a_j)\alpha_2(a_j)\ldots\alpha
_{n_j+2}(a_j) ~ i}|^{\alpha}\\
&\quad \leq s \cdot\max\limits_{i \in A_2}
|\varDelta_{\alpha_1(a_j)\alpha_2(a_j)\ldots \alpha_{n_j+2}(a_j)~
i}|^{\alpha}\\
&\quad= s \cdot\max\limits_{i \in A_2} \bigl(|\varDelta_{\alpha_1(a_j)\alpha
_2(a_j)\ldots \alpha_{n_j+2}(a_j)~ i}|^{\alpha-\delta} |\varDelta_{\alpha
_1(a_j)\alpha_2(a_j)\ldots\alpha_{n_j+2}(a_j)~ i}|^{\delta}\bigr)\\
&\quad\leq s |E_j|^{\alpha-\delta} \cdot\max\limits_{i \in A_2} |\varDelta
_{\alpha_1(a_j)\alpha_2(a_j)\ldots\alpha_{n_j+2}(a_j)~ i}|^{\delta}\\
&\quad\leq s |E_j|^{\alpha-\delta} \cdot\bigl(q_{n_j+2} q_{n_j+3} ~ |\varDelta
_{\alpha_1(a_j)\alpha_2(a_j)\ldots\alpha_{n_j}(a_j)}|\bigr)^{\delta} \leq s
\cdot|E_j| ^{\alpha-\delta} \cdot(q_{n_j+2} q_{n_j+3})^\delta.
\end{align*}

Similarly, let $L_k=L_k(j)$ be the family of all cylinders of rank
$n_j+k+1$ belonging to the cylinder $\varDelta_{\alpha_1(a_j)\alpha
_2(a_j)\ldots\alpha_{n_j+k}(a_j)}$ and to the set $(a_j, c_j]$.
Let
\[
A_k=A_k(j):= \bigl\{i: i\in\bigl\{ \alpha_{n_j+k+1}(a_j)+1, \dots, s-1 \bigr\} \bigr\}.
\]

The corresponding $\alpha$-volume of these cylinders is equal to
\begin{align*}
&\sum\limits_{i \in A_k} |\varDelta_{\alpha_1(a_j)\alpha_2(a_j)\ldots\alpha
_{n_j+k}(a_j) ~ i}|^{\alpha}\\
&\quad \leq s \cdot\max\limits_{i \in A_k}
|\varDelta_{\alpha_1(a_j)\alpha_2(a_j)\ldots \alpha_{n_j+k}(a_j)~
i}|^{\alpha}\\
&\quad= s \cdot\max\limits_{i \in A_k} \bigl(|\varDelta_{\alpha_1(a_j)\alpha
_2(a_j)\ldots \alpha_{n_j+k}(a_j)~ i}|^{\alpha-\delta} |\varDelta_{\alpha
_1(a_j)\alpha_2(a_j)\ldots\alpha_{n_j+k}(a_j)~ i}|^{\delta}\bigr)\\
&\quad\leq s |E_j|^{\alpha-\delta} \cdot\max\limits_{i \in A_k} |\varDelta
_{\alpha_1(a_j)\alpha_2(a_j)\ldots\alpha_{n_j+k}(a_j)~ i}|^{\delta}\\
&\quad\leq s |E_j|^{\alpha-\delta} \cdot\left(\prod\limits_{i=2}^{k+1} q_{n_j+i}
~ |\varDelta_{\alpha_1(a_j)\alpha_2(a_j)\ldots\alpha_{n_j}(a_j)}|\right)^{\delta
} \leq s |E_j| ^{\alpha-\delta} \cdot\left(\prod\limits_{i=2}^{k+1}
q_{n_j+i}\right)^\delta.
\end{align*}

So, the set $(a_j, c_j)$ can be covered by a countable family of
cylinders from $L_1$, $L_2, \dots, L_k,\dots$. The total $\alpha
$-volume of all these cylinders does not exceed the value
\[
s |E_j|^{\alpha-\delta} \sum\limits_{k=1}^{\infty} \left(\prod\limits
_{i=2}^{k+1} q_{n_j+i}\right)^\delta\leq S(\delta) \cdot s ~ |E_j|^{\alpha
-\delta}.
\]

Now let us estimate the $\alpha$-volume of the set $[d_j, b_j)$.

Let $R_1=R_1(j)$ be the family of all cylinders of rank $n_j+2$
belonging to the cylinder $\varDelta_{\alpha_1(b_j)\alpha_2(b_j)\ldots
\alpha_{n_j+1}(b_j)}$ and to the set $[d_j, b_j)$.
Let
\[
B_1=B_1(j):= \bigl\{i: i\in\bigl\{ 0, \dots, \alpha_{n_j+2}(b_j)- 1 \bigr\} \bigr\}.\vadjust{\eject}
\]

The corresponding $\alpha$-volume of these cylinders is equal to
\begin{align*}
&\sum\limits_{i \in B_1} |\varDelta_{\alpha_1(b_j)\alpha_2(b_j)\ldots\alpha
_{n_j+1}(b_j) ~ i}|^{\alpha} \\
&\quad\leq s \cdot\max\limits_{i \in B_1}
|\varDelta_{\alpha_1(b_j)\alpha_2(b_j)\ldots \alpha_{n_j+1}(b_j)~
i}|^{\alpha}\\
&\quad= s \cdot\max\limits_{i \in B_1} \bigl(|\varDelta_{\alpha_1(b_j)\alpha
_2(b_j)\ldots \alpha_{n_j+1}(b_j)~ i}|^{\alpha-\delta} |\varDelta_{\alpha
_1(b_j)\alpha_2(b_j)\ldots\alpha_{n_j+1}(b_j)~ i}|^{\delta}\bigr)\\
&\quad\leq s |E_j|^{\alpha-\delta} \cdot\max\limits_{i \in B_1} |\varDelta
_{\alpha_1(b_j)\alpha_2(b_j)\ldots\alpha_{n_j+1}(b_j)~ i}|^{\delta}\\
&\quad\leq s |E_j|^{\alpha-\delta} \cdot\bigl(q_{n_j+2} ~ |\varDelta_{\alpha
_1(b_j)\alpha_2(b_j)\ldots\alpha_{n_j}(b_j)}|\bigr)^{\delta} \leq s \cdot
|E_j| ^{\alpha-\delta} \cdot q_{n_j+2}^{\delta}.
\end{align*}

Similarly, for $k>1$, let $R_k=R_k(j)$ be the family of all cylinders
of rank $n_j+k+1$ belonging to the cylinder $\varDelta_{\alpha_1(b_j)\alpha
_2(b_j)\ldots\alpha_{n_j+k}(b_j)}$ and to the set $[d_j, b_j)$.
Let
\[
B_k=B_k(j):= \bigl\{i: i\in\bigl\{0, \dots, \alpha_{n_j+k+1}(b_j)-1 \bigr\} \bigr\}.
\]

The corresponding $\alpha$-volume of these cylinders is equal to
\begin{align*}
&\sum\limits_{i \in R_k} |\varDelta_{\alpha_1(b_j)\alpha_2(b_j)\ldots\alpha
_{n_j+k}(b_j) ~ i}|^{\alpha} \\
&\quad\leq s \cdot\max\limits_{i \in B_k}
|\varDelta_{\alpha_1(b_j)\alpha_2(b_j)\ldots \alpha_{n_j+k}(b_j)~
i}|^{\alpha}\\
&\quad= s \cdot\max\limits_{i \in B_k} \bigl(|\varDelta_{\alpha_1(b_j)\alpha
_2(b_j)\ldots \alpha_{n_j+k}(b_j)~ i}|^{\alpha-\delta} |\varDelta_{\alpha
_1(b_j)\alpha_2(b_j)\ldots\alpha_{n_j+k}(b_j)~ i}|^{\delta}\bigr)\\
&\quad\leq s |E_j|^{\alpha-\delta} \cdot\max\limits_{i \in B_k} |\varDelta
_{\alpha_1(b_j)\alpha_2(b_j)\ldots\alpha_{n_j+k}(b_j)~ i}|^{\delta}\\
&\quad\leq s |E_j|^{\alpha-\delta} \cdot\left(\prod\limits_{i=2}^{k+1}
q_{n_j+i}\right)^\delta~ |\varDelta_{\alpha_1(b_j)\alpha_2(b_j)\ldots\alpha
_{n_j}(b_j)}|^{\delta} \leq s \cdot|E_j| ^{\alpha-\delta} \left(\prod\limits
_{i=2}^{k+1} q_{n_j+i}\right)^\delta.
\end{align*}

So, the set $[d_j, b_j)$ can be covered by a countable family of
cylinders from $R_1$, $R_2, \dots, R_k,\dots$. The total $\alpha
$-volume of these cylinders does not exceed the value
\[
s |E_j|^{\alpha-\delta} \sum\limits_{k=1}^{\infty} \left(\prod\limits
_{i=2}^{k+1} q_{n_j+i}\right)^\delta\leq S(\delta) \cdot s ~ |E_j|^{\alpha
-\delta}.
\]

Hence, the interval $(a_j, b_j)$ can be covered by using at most $s$
cylinders from $M_0=M_0(j)$, a countable family of cylinders from $L_1,
L_2, \dots, L_k,\dots$ and a countable family of cylinders from $R_1,
R_2, \dots, R_k, \dots$. We emphasize that all these cylinders are
subsets of $(a_j, b_j)$ and their total $\alpha$-volume does not exceed
the value
\[
\bigl(1+ 2S(\delta)\bigr) \cdot s ~ |E_j|^{\alpha-\delta}, \quad\forall\delta\in
(0, \alpha).
\]

Therefore, given a subset $E$, $\alpha\in(0,1]$, $\delta\in(0, \alpha
)$, $\varepsilon>0$, and an $\varepsilon$-covering of the set $E$ by
intervals $(a_j, b_j), ~ a_j\in A, b_j \in A$, there exists an
$\varepsilon$-covering of~$E$ by cylinders from $\varPhi(Q^*)$ such that
its $\alpha$-volume does not exceed the value
\[
\bigl(1+ 2 S(\delta)\bigr) \cdot s ~ \sum\limits_j |E_j|^{\alpha-\delta}.
\]

Hence, for any $\alpha\in(0,1]$, $\delta\in(0, \alpha)$, and
$E\subset[0, 1]$, we have
\[
H^\alpha(E) \leq H^\alpha\bigl(E, \varPhi\bigl(Q^*\bigr) \bigr)\leq\bigl(1+ 2 S(\delta)\bigr) \cdot s
~ H^{\alpha-\delta}(E).
\]

So,
\[
\dim_H\bigl(E,\varPhi\bigl(Q^*\bigr)\bigr) \leq\dim_H(E)+\delta,\quad \forall\delta\in(0, \alpha
),
\]
which proves the inequality
\[
\dim_H\bigl(E, \varPhi\bigl(Q^*\bigr)\bigr) \leq\dim_H(E), \quad \forall E\subset[0, 1].
\]
Therefore,
\[
\dim_H\bigl(E, \varPhi\bigl(Q^*\bigr)\bigr)=\dim_H(E)
\]
for any $E\subset[0, 1]$,
which proves the theorem.
\end{proof}

\begin{corollary}\label{corollary 1}
If
\begin{equation}\label{dostatnya umova dovirchosta <1}
\sup\limits_{ik} q_{ik}<1,
\end{equation}
then the family $\varPhi(Q^*)$ of cylinders generated by $Q^*$-expansion
of real numbers is faithful for the calculation of the Hausdorff
dimension on the unit interval.
\end{corollary}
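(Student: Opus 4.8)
The plan is to deduce the corollary directly from Theorem~\ref{Th2.1}: it suffices to show that the uniform bound \eqref{dostatnya umova dovirchosta <1} forces $S(\delta)<+\infty$ for every $\delta>0$, which is precisely hypothesis \eqref{sufficient cond for Q* faithfulness}. So the whole argument reduces to a short estimate on the series defining $S(m,\delta)$.

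First I would introduce the single constant $q:=\sup_{ik}q_{ik}$, so that $q<1$ by assumption. Since $q_k=\max_i q_{ik}\le q$ for every $k$, each finite product appearing in the definition of $S(m,\delta)$ obeys the uniform estimate $\prod_{i=m+1}^{m+k}q_i\le q^{k}$, independently of the shift $m$. Substituting this into the series gives, for any fixed $\delta>0$,
\[
S(m,\delta)=\sum_{k=1}^\infty\Bigl(\prod_{i=m+1}^{m+k}q_i\Bigr)^{\delta}\le\sum_{k=1}^\infty q^{k\delta}=\frac{q^{\delta}}{1-q^{\delta}},
\]
a convergent geometric series because $0\le q^{\delta}<1$. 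The crucial point is that this bound does not depend on $m$, so taking the supremum over $m$ preserves finiteness: one gets $S(\delta)\le q^{\delta}/(1-q^{\delta})<+\infty$ for all $\delta>0$. Hence condition \eqref{sufficient cond for Q* faithfulness} is satisfied, and Theorem~\ref{Th2.1} applies at once, yielding $\dim_H E=\dim_H\bigl(E,\varPhi(Q^*)\bigr)$ for all $E\subset[0,1]$, that is, faithfulness of $\varPhi(Q^*)$.

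There is essentially no analytic obstacle here; the only feature that actually needs the hypothesis is the \emph{uniformity} of the bound $q<1$ across all digits $i$ and all ranks $k$, which is what lets the geometric series be summed with a single ratio and what makes the estimate independent of $m$. If instead the entries were merely bounded by $1$ with $\sup_{ik}q_{ik}=1$, the ratio $q^{\delta}$ could approach $1$ and the summation argument would collapse; thus the strict inequality in \eqref{dostatnya umova dovirchosta <1} is precisely the ingredient being exploited.
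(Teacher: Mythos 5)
Your proof is correct and follows essentially the same route as the paper's own: both reduce the corollary to Theorem~\ref{Th2.1} by bounding each product $\prod_{i=m+1}^{m+k}q_i$ by $q^k$ for a uniform constant $q<1$, summing the geometric series to get $S(\delta)\le q^\delta/(1-q^\delta)<+\infty$ independently of $m$. The only cosmetic difference is that you take $q=\sup_{ik}q_{ik}$ itself (with $q_k\le q$), while the paper picks some constant $q<1$ with $q_k<q$; this changes nothing.
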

\begin{proof}
If $\sup_{ik} q_{ik}<1$, then there exists a positive constant
$q<1$ such that $q_{k}< q$ for all $k\in N$.
In such a case,
\[
S(m, \delta) := \sum\limits_{k=1}^{\infty} \left( \prod
_{i=m+1}^{m+k}q_i\right)^{\delta} \leq \sum\limits_{k=1}^{\infty}
q^{k\delta} = \frac{q^{\delta}}{1- q^{\delta}}, \quad \forall m \in N,
\]
so that
\[
S(\delta) := \sup\limits_m S(m, \delta) \leq\frac{q^{\delta}}{1-
q^{\delta}} < +\infty, \quad \forall\delta>0.
\]
Therefore, the family $\varPhi(Q^*)$ is faithful.
\end{proof}

From this corollary it follows, in particular, that the family $\varPhi
(Q^*)$ of cylinders generated by the matrix
\[
Q^*=
\begin{pmatrix}

\frac1{10}&\ldots&\frac1{10^k}&\ldots\\[3pt]

{{\frac12}-{\frac1{10}}}&\ldots&\
{{\frac12}-{\frac1{10^k}}}&\ldots\\[3pt]

{{\frac12}-{\frac1{10}}}&\ldots&\
{{\frac12}-{\frac1{10^k}}}&\ldots\\[3pt]

\frac1{10}&\ldots&\frac1{10^k}&\ldots
\end{pmatrix}
\]
is faithful.

Let us show how sufficient conditions for the faithfulness obtained in
\cite{AlbeverioTorbin2005} can be easily derived from our results.
\begin{corollary}
If $\inf\limits_{k}\{q_{0k}, q_{(s-1)k}\} >0$, then the family $\varPhi
(Q^*)$ of cylinders generated by $Q^*$-expansion of real numbers is
faithful for the calculation of the Hausdorff--Besicovitch dimension on
the unit interval.
\end{corollary}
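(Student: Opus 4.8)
The plan is to deduce this from the immediately preceding Corollary~\ref{corollary 1}: I would show that the hypothesis $\inf_k\{q_{0k}, q_{(s-1)k}\} > 0$ already forces $\sup_{ik} q_{ik} < 1$, after which faithfulness follows at once. This keeps all of the genuine analytic content inside Theorem~\ref{Th2.1} and Corollary~\ref{corollary 1}, and reduces the present statement to a short algebraic observation.

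First I would set $c := \inf_k\{q_{0k}, q_{(s-1)k}\}$, which is strictly positive by assumption, so that $q_{0k} \geq c$ and $q_{(s-1)k} \geq c$ for every column index $k$. The single idea driving the argument is the normalization built into a $Q^*$-matrix: each column is a probability vector, $\sum_{i=0}^{s-1} q_{ik} = 1$. This lets me bound each individual entry away from $1$, because for any fixed index $i$ at least one of the two guaranteed-large entries $q_{0k}$, $q_{(s-1)k}$ is distinct from $q_{ik}$ itself and hence survives in the complementary sum.

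Concretely, I would split into two cases. If $i \neq 0$, then $q_{0k}$ appears among the terms other than $q_{ik}$, so $q_{ik} \leq 1 - q_{0k} \leq 1 - c$. If $i = 0$, then (using $s \geq 2$, which guarantees $s-1 \neq 0$) the entry $q_{(s-1)k}$ is one of the remaining terms, so $q_{0k} \leq 1 - q_{(s-1)k} \leq 1 - c$. In either case $q_{ik} \leq 1 - c$, and taking the supremum over all $i$ and $k$ yields $\sup_{ik} q_{ik} \leq 1 - c < 1$. This is precisely condition~\eqref{dostatnya umova dovirchosta <1}, so Corollary~\ref{corollary 1} applies and the family $\varPhi(Q^*)$ is faithful.

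There is essentially no analytic obstacle here, since the bound on $S(\delta)$ and the passage to the Hausdorff dimension inequality are already encapsulated in the earlier results. The only point requiring a little care is the case distinction on whether $i$ coincides with $0$; one must always subtract a genuinely large entry that is \emph{distinct} from $q_{ik}$ itself, which is exactly what the two guaranteed-large corners of each column provide. This is routine bookkeeping given the probability-vector normalization.
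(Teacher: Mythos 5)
Your proof is correct and takes essentially the same route as the paper: both reduce the statement to Corollary~\ref{corollary 1} by using the column normalization $\sum_{i=0}^{s-1} q_{ik}=1$ to conclude $\sup_{ik} q_{ik}<1$. If anything, your case distinction is slightly more careful than the paper's, which asserts $\sup_{ik} q_{ik}\leq 1-2q_*$ (a bound that does not quite hold for the entries $q_{0k}$, $q_{(s-1)k}$ themselves), whereas your bound $1-c<1$ is correct and equally sufficient.
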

\begin{proof}
If $\inf_{k}\{q_{0k}, q_{(s-1)k}\} >0$, then there exists a
positive constant $q_*$ such that $q_{0k}>q_*, q_{(s-1)k}>q_*, ~
\forall k\in N$. Therefore,
$ \sup_{ik} q_{ik}\leq1-2q_*<1$. So, the faithfulness of $\varPhi
(Q^*)$ follows from the previous corollary.\vadjust{\eject}
\end{proof}

From the proof of the corollary it follows that it is easy to extend
the results of \cite{AlbeverioTorbin2005} in the following way.

\begin{corollary}
If $\inf_{k}\{q_{0k}\} >0$, then the family $\varPhi(Q^*)$ of
cylinders generated by $Q^*$-expansion of real numbers is faithful for
the calculation of the Hausdorff dimension on the unit interval.
\end{corollary}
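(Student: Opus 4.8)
The plan is to remain inside the scheme of Theorem~\ref{Th2.1}, but to track the entries $q_{ik}$ with $i\neq 0$ separately from $q_{0k}$, since the hypothesis controls only the latter. Put $q_*:=\inf_k q_{0k}>0$. Then for every column $k$ and every $i\neq 0$ we have $q_{ik}\le 1-q_{0k}\le 1-q_*<1$, so all non-zero-digit entries are uniformly bounded away from $1$. I cannot simply quote Corollary~\ref{corollary 1}: the entry $q_{0k}$ itself may tend to $1$, so $\sup_{ik}q_{ik}$ can equal $1$ and the series $S(\delta)$ built from the full column maxima $q_k=\max_i q_{ik}$ may diverge. Hence I would re-enter the covering construction of Theorem~\ref{Th2.1} and redo the length estimates using $\max_{i\neq 0}q_{ik}$ in place of $q_k$ wherever possible.

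The observation that makes this feasible is that the fat digit $0$ never occurs in a harmful role once one follows the book-keeping of the two staircases. In the right-hand staircase covering $(a_j,c_j)$ the branching (final) digit always lies in $A_k\subseteq\{1,\dots,s-1\}$, so each cylinder split off there carries a final factor $\le 1-q_{0,n_j+k+1}\le 1-q_*$, and digit $0$ can enter only along the descent path of $a_j$. In the left-hand staircase covering $[d_j,b_j)$ the branching digit may equal $0$, but a cylinder is split off at level $k$ only when $\alpha_{n_j+k+1}(b_j)\neq 0$, i.e.\ when the path itself makes a non-zero step. Writing $P_k$ for the relative length of the rank-$(n_j{+}k)$ cylinder through the relevant endpoint and $m_k$ for the largest relative length among the cylinders split off at that level, both situations are absorbed by the single telescoping inequality $P_k\,m_k\le \tfrac1{q_*}\,(P_k-P_{k+1})$: if the next path digit is $0$ then $m_k\le 1-q_{0,n_j+k+1}$ gives $P_k m_k\le P_k-P_{k+1}$, while if it is non-zero then $P_{k+1}\le(1-q_*)P_k$ gives $P_k m_k\le P_k\le \tfrac1{q_*}(P_k-P_{k+1})$. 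Summing yields $\sum_k P_k m_k\le 1/q_*$ for each staircase, and assembling the two staircases exactly as in Theorem~\ref{Th2.1} would produce $\dim_H(E,\varPhi(Q^*))\le\dim_H(E)$, the reverse inequality being automatic.

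The step I expect to be the genuine obstacle is upgrading this from the total length ($\delta=1$) to a bound uniform in $j$ for \emph{every} $\delta\in(0,\alpha)$, which is what the argument of Theorem~\ref{Th2.1} actually consumes. The telescoping above is an $L^1$-type estimate; for $\delta<1$ the $\delta$-volume along a long block of columns carrying the fat digit $0$ is not controlled by $(P_k-P_{k+1})^\delta$ alone, and one must exploit the compensating fact that where $q_{0k}$ is close to $1$ the complementary mass $1-q_{0k}$ carried by the branching cylinders is correspondingly small. Quantifying this trade-off---combining the telescoping with the geometric decay $P_k\le(1-q_*)^{\#\{\,l\le k:\ \alpha_{n_j+l}\neq 0\,\}}$ and organising the sum over maximal runs of the digit $0$ so that the resulting constant stays finite uniformly---is precisely what must replace the elementary geometric series $\sum_k q^{k\delta}$ used in Corollary~\ref{corollary 1}, and is where the proof has to do its real work.
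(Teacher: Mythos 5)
Your refusal to quote Corollary~\ref{corollary 1} directly is exactly right, and it is worth knowing that the shortcut you rejected \emph{is} the paper's entire proof: the paper asserts that $q_{0k}>q_*$ for all $k$ forces $\sup_{ik}q_{ik}\le 1-q_*<1$ and then invokes Corollary~\ref{corollary 1}. That inference is valid only for digits $i\ne 0$ (where indeed $q_{ik}\le 1-q_{0k}\le 1-q_*$); it gives no upper bound on $q_{0k}$ itself. Concretely, take $s=2$, $q_{0k}=\frac{k}{k+1}$, $q_{1k}=\frac{1}{k+1}$: all entries are positive, each column sums to $1$, and $\prod_{k=1}^{n}\max_i q_{ik}=\frac{1}{n+1}\to 0$, so this is an admissible $Q^*$-matrix with $\inf_k q_{0k}=\frac12>0$; yet $\sup_{ik}q_{ik}=1$, and $S(m,\delta)=\sum_{k=1}^{\infty}\bigl(\frac{m+1}{m+k+1}\bigr)^{\delta}=+\infty$ for every $\delta\le 1$, so neither Corollary~\ref{corollary 1} nor Theorem~\ref{Th2.1} applies. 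The published argument therefore proves the corollary only under the additional hypothesis $\sup_k q_{0k}<1$ (in which case your worry disappears and the one-line proof is correct); in the remaining regime you have located a genuine gap in the paper, not merely declined an available simplification.

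However, your own proposal does not prove the statement either, and the hole is more serious than your closing paragraph suggests. The telescoping inequality only yields the $\delta=1$ bound, which is trivial anyway: the staircase cylinders are non-overlapping subsets of $(a_j,b_j)$, so their total length is at most $|E_j|$ with no argument at all. All the substance lives at $\delta<1$, and there the target estimate of your scheme --- covering all of $(a_j,c_j)$ by cylinders \emph{contained in} $(a_j,b_j)$ with $\alpha$-volume at most $C(\delta)|E_j|^{\alpha-\delta}$, uniformly in $j$ --- is provably false for the matrix above. Since $t\mapsto t^{\alpha}$ is subadditive, any covering of the whole of $(a_j,c_j)$ by cylinders inside it has $\alpha$-volume at least that of the family of maximal cylinders in $(a_j,c_j)$, i.e.\ of the staircase itself. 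Now take $a_j$ just to the right of $\frac{1}{M+2}$ (digits $0^{M}1\ldots$) and $b_j$ just above $\frac{1}{n+2}$ with $M=2n$: then $n_j=n$, $c_j=\frac{1}{n+2}$, $|E_j|\approx\frac{1}{2n}$, and the staircase consists of the siblings $\varDelta_{0^{k}1}=[\frac{1}{k+2},\frac{1}{k+1}]$ for $n<k<M$, whose $\alpha$-volume is of order $n\cdot n^{-2\alpha}=n^{1-2\alpha}$, exceeding $|E_j|^{\alpha}$ by a factor of order $n^{1-\alpha}\to\infty$; the required bound thus fails for every $\delta<1-\alpha$. So no bookkeeping of runs of the digit $0$ --- however the trade-off between $1-q_{0k}$ and the path lengths is organised --- can rescue an argument that keeps all covering cylinders inside $(a_j,b_j)$. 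If the corollary is true in this generality, its proof must use cylinders that overshoot $a_j$ to the left (the fat cylinders $\varDelta_{0^{k}}$ themselves, which are exactly the efficient covers the staircase is missing), or must argue globally on $E$ rather than interval by interval; that is a genuinely different mechanism from Theorem~\ref{Th2.1}, and neither your proposal nor the paper supplies it.
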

\begin{proof}
If $\inf_{k}\{q_{0k}\} >0$, then there exists a positive
constant $q_*$ such that $q_{0k}>q_*, ~ \forall k\in N$. Therefore,
$ \sup_{ik} q_{ik}\leq1-q_*<1$. So, the faithfulness of $\varPhi
(Q^*)$ follows from Corollary \ref{corollary 1}.
\end{proof}
So, for the faithfulness of the family $\varPhi(Q^*)$, it suffices to
control only elements of the first raw of the matrix $Q^*$.

Let us also mention that Theorem \ref{Th2.1} can give a positive
answer on the faithfulness of $\varPhi(Q^*)$ even for the case where $\inf
_{k}\{q_{0k}, q_{(s-1)k}\} =0$ and $\sup_{ik} q_{ik}=1$
simultaneously. To illustrate this, let us consider the matrix

\[
Q^*=
\begin{pmatrix}

\frac1{4}&\frac{1}{3}&\ldots&\frac{1}{2^{n+1}}&\frac{1}{3}&\ldots\\[3pt]

\frac1{2}&\frac{1}{3}&\ldots&\frac{2^n-1}{2^{n}}&\frac{1}{3}&\ldots\\[3pt]

\frac1{4}&\frac{1}{3}&\ldots&\frac{1}{2^{n+1}}&\frac{1}{3}&\ldots\\[3pt]
\end{pmatrix}
,
\]
that is,
\[
q_{0k}=q_{1k}=q_{2k}=\frac13, \quad k=2n, ~ n\in N,
\]
and
\[
q_{0k}=q_{2k}=\frac{1}{2^{n+1}},\qquad q_{1k}= \frac{2^n-1}{2^n}, \quad k=2n-1, ~
n\in N.
\]

In such a case, $\inf_{k}\{q_{0k}, q_{(s-1)k}\} =0$ and $\sup
_{ik} q_{ik}=1$, but it is clear that $q_k q_{k+1}< \frac13$ for
all $k\in N$, and, therefore,
\[
S(m, \delta) := \sum\limits_{k=1}^{\infty} \left( \prod\limits
_{i=m+1}^{m+k}q_i\right)^{\delta} \leq q_{m+1}^{\delta}+ 2 \sum\limits
_{n=1}^{\infty} \biggl(\frac13\biggr)^{n\delta} = 1+ \frac{2}{3^\delta-1}, \quad\forall
m \in N.
\]
%

So, $ S(\delta) < +\infty$ for all $\delta>0$, which proves the
faithfulness of the family~$\varPhi(Q^*)$.

\section{On fine fractal properties of random variables with
independent $Q^{*}$-symbols}

Let $\{\xi_k\}$ be a sequence of independent random variables taking
values $0, 1, \dots$, $s-1$ with probabilities $p_{0k}, p_{1k}, \dots,
p_{s-1 k}$, respectively. The random variable
\begin{equation}
\xi= \varDelta^{Q^*}_{\xi_1 \xi_2 \dots\xi_k\dots}
\end{equation}
is said to be the random variable with independent $Q^*$-symbols. Let
$\nu_\xi$ be the corresponding probability measure.

The Lebesgue structure of $\nu_\xi$ is well studied (see, e.g., \cite
{AlbeverioTorbin2005, AKPT2011}). It is known, in particular, that the
distribution of $\xi$ is of pure type. It is of pure discrete type if\vadjust{\eject}
and only if
\begin{equation}\label{DiscrCond}
\prod_{k=1}^\infty\max_i
p_{ik}>0,
\end{equation}
of pure absolutely continuous type if and only if
\begin{equation}\label{AbsCont}
\prod_{k=1}^\infty\left( \sqrt{p_{0k}q_{0k}}+
\sqrt{p_{1k}q_{1k}}+\cdots+ \sqrt{p_{(s-1)k}q_{(s-1)k}}\right)>0,
\end{equation}
and of pure singularly continuous type if and only if infinite products
(\ref{DiscrCond}) and~(\ref{AbsCont}) are equal to zero.

Let us recall that the Hausdorff dimension of the distribution of a
random variable $\tau$ is defined as follows:
\[
\dim_{H} (\tau)=\mathop{\inf} \bigl\{\dim_H (E),\, \, E\in\mathcal
{B_{\tau}}\bigr\},
\]
where $\mathcal{B}_{\tau}$ is the family of all possible (not
necessarily closed) supports of the random variable $\tau$, that is,
\[
\mathcal{B}_{\tau} =\bigr\{E:\, E\in\mathcal{B},\, \, P_{\tau}
(E)=1\bigl\}.
\]

Let us also recall the notion of the Hausdorff--Billingsley dimension
of a~set w.r.t.\ a probability measure and w.r.t.\ a system of partitions.
Let $\upsilon$ be a~continuous probability measure on $[0,1]$, and let
$\varPhi$ be the family of cylinders generated by some expansion. Then the
$(\upsilon\, -\, \alpha)$-Hausdorff measure of a set $E \subset
[0,1]$ w.r.t.\ the family $\varPhi$ and measure $\upsilon$ is defined as follows:
\[
H^{\alpha} (E,\, \upsilon,\varPhi)=\mathop{\lim}\limits_{\varepsilon
\to0 } \biggl[\mathop{\mathop{\inf}\limits_{\upsilon(E_{j} )\le
\varepsilon} \biggl\{\sum_{j}\upsilon^{\alpha} (E_{j} ) \biggr\}}
\biggr]=\mathop{\lim}\limits_{\varepsilon\to0 } H_{\varepsilon
}^{\alpha} (E,\, \upsilon,\varPhi),
\]
where $E_{j} \in\varPhi,\, \, \bigcup_{j}E_{j} \supset E$.

The number
\[
\dim_{\upsilon} (E,\, \varPhi)=\inf\bigl\{ \alpha:\, H^{\alpha} (E,\,
\upsilon,\varPhi)\, =0\bigr\}
\]
is called \textit{the Hausdorff--Billingsley dimension} of a set $E$
w.r.t.\  $\upsilon$ and $\varPhi$.

Let
\begin{align*}
h_j &:= - \sum\limits_{i=0}^{s-1} p_{ij} \ln p_{ij}, \qquad 0 \ln0 := 0 , \qquad
H_n := \sum\limits_{j=1}^n h_j,\\
b_j &:= -\sum_{i=0}^{s-1} p_{ij}
\ln q_{ij}, \qquad B_n := \sum_{j=1}^n b_j,
\end{align*}

and
\[
d_j := - b^2_j + \sum_{i=0}^{s-1} p_{ij}
\ln^2 q_{ij}.\vadjust{\eject}
\]

\begin{theorem} Assume that the following conditions hold:
\begin{align}\label{sufficient condition for Q*}
&S(\delta) < +\infty, \quad \forall\delta>0;\\
\label{umova na dyspersiyu}
&\sum\limits_{j=1}^{\infty} \frac{d_j}{j^2} < +\infty;\\
\label{umova na vidokremlenist vid 0 B_n}
&\varliminf\limits_{{ n \to\infty}} \frac{B_n}{n} > 0.
\end{align}
Then the Hausdorff dimension of the distribution of a random variable
with independent $Q^*$-symbols is equal to
\begin{equation}\label{dim_H nu_xi}
\dim_H \nu_\xi= \varliminf\limits_{{ n \to
\infty}}\frac{H_n}{B_n}.
\end{equation}
\end{theorem}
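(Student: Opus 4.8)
The plan is to derive the formula by marrying the faithfulness provided by Theorem~\ref{Th2.1} with a strong-law analysis of the two random sums that describe a typical cylinder. Write $\varDelta_n(x)=\varDelta_{\xi_1(x)\ldots\xi_n(x)}$ for the rank-$n$ cylinder containing $x$; then $\ln\nu_\xi(\varDelta_n(x))=\sum_{k=1}^n\ln p_{\xi_k k}$ and $\ln|\varDelta_n(x)|=\sum_{k=1}^n\ln q_{\xi_k k}$ are sums of independent terms with means $-h_k$ and $-b_k$ respectively, so the object to be computed is the local dimension $\varliminf_n \frac{\ln\nu_\xi(\varDelta_n(x))}{\ln|\varDelta_n(x)|}$. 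Since condition \eqref{sufficient condition for Q*} is exactly the hypothesis of Theorem~\ref{Th2.1}, the family $\varPhi(Q^*)$ is faithful, so $\dim_H E=\dim_H(E,\varPhi(Q^*))$ for every $E$, and in particular $\dim_H\nu_\xi=\inf\{\dim_H(E,\varPhi(Q^*)):\nu_\xi(E)=1\}$. This reduction is what permits the entire computation to be run with net (cylinder) coverings.

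The next step is Kolmogorov's strong law of large numbers for the two sums. For the numerator the variances are \emph{automatically} summable: since each digit takes only the $s$ values $0,\ldots,s-1$ and $\sup_{0<t\le1}t\ln^2 t=4e^{-2}$, one has $\mathrm{Var}(\ln p_{\xi_k k})\le\sum_i p_{ik}\ln^2 p_{ik}\le 4se^{-2}$ uniformly in $k$, whence $\sum_k k^{-2}\mathrm{Var}(\ln p_{\xi_k k})<\infty$ and Kolmogorov's criterion yields $\sum_{k=1}^n\ln p_{\xi_k k}=-H_n+o(n)$ almost surely. For the denominator the variance is precisely $\mathrm{Var}(\ln q_{\xi_k k})=d_k$, which need not be bounded because the $q_{ik}$ may tend to $0$; here condition \eqref{umova na dyspersiyu} is exactly the summability $\sum_k d_k/k^2<\infty$, and it gives $\sum_{k=1}^n\ln q_{\xi_k k}=-B_n+o(n)$ almost surely. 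This is where the two hypotheses play genuinely different roles: \eqref{umova na dyspersiyu} is needed only for the length term, while the entropy term is controlled for free.

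I would then combine the two asymptotics. The Gibbs inequality applied to the probability vectors $(p_{ik})_i$ and $(q_{ik})_i$, which both sum to $1$, gives $h_k\le b_k$, hence $0\le H_n\le B_n$ and $H_n/B_n\le 1$. Condition \eqref{umova na vidokremlenist vid 0 B_n} guarantees $B_n\ge cn$ for some $c>0$ and all large $n$, so the additive $o(n)$ errors become $o(B_n)$; a short computation using $0\le B_n-H_n\le B_n$ then shows that almost surely $\frac{\ln\nu_\xi(\varDelta_n(x))}{\ln|\varDelta_n(x)|}=\frac{-H_n+o(n)}{-B_n+o(n)}=\frac{H_n}{B_n}+o(1)$, and therefore $\varliminf_n\frac{\ln\nu_\xi(\varDelta_n(x))}{\ln|\varDelta_n(x)|}=\varliminf_n\frac{H_n}{B_n}$ for $\nu_\xi$-almost every $x$.

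The final, and I expect the most delicate, step is to convert this almost sure local-dimension identity into \eqref{dim_H nu_xi} through the two-sided Billingsley argument. Set $\beta:=\varliminf_n H_n/B_n$. For the lower bound, on the full-measure set where the local dimension equals $\beta$ one has $\nu_\xi(\varDelta_n(x))\le|\varDelta_n(x)|^{\beta-\eps}$ for all large $n$; stratifying this set by the threshold rank $N$ (to defeat the nonuniformity of ``large $n$''), picking a piece of positive measure, and applying the mass distribution principle through faithful cylinder coverings gives $\dim_H E\ge\beta$ for every support $E$. For the upper bound the reverse inequality $\nu_\xi(\varDelta_n(x))\ge|\varDelta_n(x)|^{\beta+\eps}$ holds infinitely often, and, exploiting the nested-or-disjoint structure of $\varPhi(Q^*)$, I would extract from the resulting fine cover a disjoint subfamily $\{\varDelta_j\}$ and estimate $\sum_j|\varDelta_j|^{\beta+\eps}\le\sum_j\nu_\xi(\varDelta_j)\le 1$, so that $\dim_H(E_0,\varPhi(Q^*))\le\beta+\eps$ on the full-measure set $E_0$; faithfulness then yields $\dim_H\nu_\xi\le\beta$. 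Thus the main obstacle is not the probabilistic estimate but its passage to Hausdorff dimension: the upper bound is available only because Theorem~\ref{Th2.1} licenses net coverings, and the lower bound needs the stratification to make the mass distribution principle apply uniformly.
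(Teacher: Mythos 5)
Your proposal is correct, and its probabilistic half coincides with the paper's: both apply Kolmogorov's strong law of large numbers to the sums $\sum_k\ln p_{\xi_k k}$ and $\sum_k\ln q_{\xi_k k}$ (the variance of the first being uniformly bounded, so that hypothesis (\ref{umova na dyspersiyu}) is needed only for the second), and both use the Gibbs inequality $h_j\le b_j$ together with (\ref{umova na vidokremlenist vid 0 B_n}) to turn the additive $o(n)$ errors into the almost-sure identity $\varliminf_n \ln\nu_\xi(\varDelta_n(x))/\ln|\varDelta_n(x)|=\varliminf_n H_n/B_n=:\beta$. Where you diverge is the dimension-theoretic endgame. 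The paper routes it through the Hausdorff--Billingsley dimension: it constructs the full-measure set $A$ with $\dim_\nu(A,\varPhi)=1$, cites Theorems 2.1 and 2.2 of \cite{Billingsley61} to conclude $\dim_\mu(A,\varPhi)=\beta$, and handles an arbitrary support $C$ by intersecting it with $A$ and applying the cited lower-bound theorem to $C\cap A\subset A_3$. You instead re-prove those two Billingsley-type bounds from scratch: the lower one by stratifying the full-measure set by the threshold rank and running the mass distribution principle over cylinder coverings, the upper one by a Vitali-type disjointification exploiting the net structure. That is a legitimate trade --- your route is self-contained where the paper leans on citations --- and your two ingredients are exactly what makes those theorems work; the only details to supply are that distinct maximal cylinders of a net family are (essentially) disjoint and that lengths of rank-$\ge m$ cylinders tend to zero uniformly as $m\to\infty$ (which follows from $S(\delta)<+\infty$), so that the extracted covers have small mesh.

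One conceptual slip should be corrected, although it does not break the proof: your closing sentence has the role of faithfulness backwards. Since every cylinder covering is in particular an admissible covering, the inequality $\dim_H(E)\le\dim_H(E,\varPhi(Q^*))$ holds unconditionally; hence your upper bound $\dim_H(E_0,\varPhi(Q^*))\le\beta+\varepsilon$ for a full-measure set $E_0$ transfers to the classical Hausdorff dimension for free, with no appeal to Theorem~\ref{Th2.1}. It is the \emph{lower} bound that genuinely requires faithfulness: the mass distribution principle run over cylinders only bounds $\dim_H(E,\varPhi(Q^*))$ from below, and without Theorem~\ref{Th2.1} this says nothing about $\dim_H(E)$, which could a priori be strictly smaller. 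In the body of your argument you do invoke faithfulness at the lower-bound step, so the proof stands as written; only the final assessment of where Theorem~\ref{Th2.1} is indispensable should be inverted.
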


\begin{proof}
Let $\varDelta_n(x)= \varDelta^{Q^*}_{\alpha_1(x) \alpha_2(x)\dots
\alpha_n(x) }$ be the cylinder of rank $n$ of the $Q^*$-expansion of
$x$. Let $\nu=\nu_\xi$, and let
$\mu$ be the Lebesgue measure on $[0,1]$.

Then
\begin{align*}
\nu\bigl(\varDelta_n(x)\bigr)&= p_{\alpha_1(x)1} \cdot
p_{\alpha_2(x)2} \cdot\ldots\cdot p_{\alpha_n(x)n} ,\\
\mu
\bigl(\varDelta_n(x)\bigr)&= q_{\alpha_1(x)1} \cdot q_{\alpha_2(x)2} \cdot\ldots
\cdot q_{\alpha_n(x)n} .
\end{align*}

Let us consider
\[
\frac{\ln\nu
(\varDelta_n(x))}{ \ln\mu(\varDelta_n(x))}= \frac{\sum_{j=1}^n
\ln p_{\alpha_j(x)j}}{\sum_{j=1}^n \ln q_{\alpha_j(x)j}}.
\]

If a real number $x= \varDelta^{Q^*}_{\alpha_1(x) \alpha_2(x)\dots\alpha
_n(x)\dots}$
is chosen randomly so that\break $P(\alpha_j(x)=i)= p_{ij}$
(i.e., the distribution of the random variable $x$ coincides with the
initial probability measure $\nu$),
then $\{ \eta_j \}= \{ \eta_j(x)\} = \{ \ln p_{\alpha_j(x)j}\}$
and $\{ \psi_j \}= \{ \psi_j(x)\} = \{ \ln q_{\alpha_j(x)j}\}$ are
sequences of independent random variables with the following distributions:

\begin{table}[h!]
\begin{tabular}{|c|c|c|c|c|}
\hline
$\eta_j$ & $\ln p_{0j}$ & $\ln p_{1j}$ & \ldots & $\ln p_{(s-1) j}$ \\[2pt]
\hline
$~$ & $p_{0j}$ & $p_{1j}$ & \ldots & $p_{(s-1)j}$ \\[2pt] \hline
\end{tabular}
$~~~~$
\begin{tabular}{|c|c|c|c|c|}
\hline
$\psi_j$ & $\ln q_{0j}$ & $\ln q_{1j}$ & \ldots & $\ln q_{(s-1) j}$ \\[2pt]
\hline
$~$ & $p_{0j}$ & $p_{1j}$ & \ldots & $p_{(s-1)j}$ \\[2pt] \hline
\end{tabular}
\end{table}

It is clear that
$ M \eta_j = -h_j$ and $ |h_j| \le\ln s$.

It is not hard to check that $ M \eta_j^2 = \sum_{i=0}^{s-1} p_{ij}\ln
^2 p_{ij} \leq\frac{4}{e^2}$ \cite{AlbeverioTorbin2005}.

From the strong law of large numbers \cite{Sh} it follows that, for $\nu
$-almost all $x \in[0,1]$, the following equality holds:
\begin{equation}\label{Kol}
\lim_{n \to\infty}
\frac{(\eta_1+\eta_2+\cdots+\eta_n)- M(\eta_1+\eta_2+\cdots+
\eta_n)}{n}=0.
\end{equation}
It is clear that $M(\eta_1+\eta_2+\cdots+\eta_n)= -H_n$.

To show that the strong law of large numbers can also be applied to the
sequence $\{ \psi_j \}$, let us consider
\[
M \psi_j=
\sum_{i=0}^{s-1} p_{ij} \ln q_{ij}, \qquad M \psi^2_j=
\sum_{i=0}^{s-1} p_{ij} \ln^2 q_{ij}.
\]

Since $d_j = D (\psi_j)$ and the series $\sum_{j=1}^{\infty}
\frac{d_j}{j^2}$ converges (see (\ref{umova na dyspersiyu})), by
Kolmogorov's theorem (strong law of large numbers \cite{Sh}) it follows
that, for $\nu$-almost all $x \in[0,1]$,
\begin{equation}\label{Kol2}
\lim_{n \to\infty}
\frac{(\psi_1+\psi_2+\cdots+\psi_n)- M(\psi_1+\psi_2+\cdots+
\psi_n)}{n}=0.
\end{equation}

Let us remark that $M(\psi_1+\psi_2+\cdots
+\psi_n)= -B_n$.

Now let us consider the set
\begin{align*}
A&= \bigg\{ x:  \lim_{n \to\infty}
\bigg( \frac{\eta_1(x)+ \eta_2(x)+\cdots+ \eta_n(x)}{\psi_1(x)+
\psi_2(x)+\cdots+ \psi_n(x)} - \frac{H_n}{B_n} \bigg) =0 \bigg\}\\
&= \bigg\{ x:  \lim_{n \to\infty} \frac{
( \frac{ \eta_1(x)+ \eta_2(x)+\cdots+ \eta_n(x)+H_n}{n} )-
\frac{H_n}{B_n} ( \frac{ \psi_1(x)+ \psi_2(x)+\cdots+
\psi_n(x)+B_n}{n} ) }{( \frac{ \psi_1(x)+
\psi_2(x)+\cdots+ \psi_n(x)+ B_n}{n} ) - \frac{B_n}{n}} =0
\bigg\}.
\end{align*}

By the Gibbs inequality it follows that $h_j\leq b_j$. Hence,
$0\leq \frac{H_n}{B_n}=\frac{\sum_{j=1}^n h_j}{\sum_{j=1}^n b_j}\leq1$.

Since $\varliminf_{{ n \to\infty}} \frac{B_n}{n} > 0$ (see
(\ref{umova na vidokremlenist vid 0 B_n})), we deduce the existence of
a constant $c_1 >0$ such that $ | \frac{B_n}{n} | \ge c_1$
for all $n \in N$.

Therefore, for $\nu$-almost all $x \in[0,1] $,
\[
\lim_{n \to\infty} \frac{ (
\frac{ \eta_1(x)+ \eta_2(x)+\cdots+ \eta_n(x)+H_n}{n} )-
\frac{H_n}{B_n} ( \frac{ \psi_1(x)+ \psi_2(x)+\cdots+
\psi_n(x)+B_n}{n} ) }{( \frac{ \psi_1(x)+ \psi_2(x)+\cdots+
\psi_n(x)+B_n}{n} ) - \frac{B_n}{n}} =0.
\]
So, $\nu(A)=1$ and $\dim_\nu(A, \varPhi)=1$.

Let us consider the sets
\begin{align*}
A_1&= \bigg\{ x: ~~
\varliminf_{{n \to\infty}} \bigg( \frac{\eta_1(x)+
\eta_2(x)+\cdots+ \eta_n(x)}{\psi_1(x)+ \psi_2(x)+\cdots+ \psi_n(x)} -
\frac{H_n}{B_n} \bigg) =0 \bigg\},\\
A_2&= \bigg\{ x: ~~ \varliminf_{{n \to\infty}} \bigg(
\frac{\eta_1(x)+ \eta_2(x)+\cdots+ \eta_n(x)}{\psi_1(x)+
\psi_2(x)+\cdots+ \psi_n(x)} \bigg) \leq
\varliminf_{{n \to\infty}} \frac{H_n}{B_n} \bigg\} \\
&= \bigg\{ x: ~~
\varliminf_{{n \to\infty}} \frac{ \ln\nu
(\varDelta_n(x))}{\ln\mu(\varDelta_n(x))} \leq
\varliminf_{{n \to\infty}} \frac{H_n}{B_n} \bigg\},
\end{align*}

and
\begin{align*}
A_3&= \bigg\{ x: ~~ \varliminf_{{n \to\infty}} \bigg(
\frac{\eta_1(x)+ \eta_2(x)+\cdots+ \eta_n(x)}{\psi_1(x)+
\psi_2(x)+\cdots+ \psi_n(x)} \bigg) \geq
\varliminf_{{n \to\infty}} \frac{H_n}{B_n} \bigg\} \\
&= \bigg\{ x: ~~
\varliminf_{{n \to\infty}} \frac{ \ln\nu
(\varDelta_n(x))}{\ln\mu(\varDelta_n(x))} \geq
\varliminf_{{n \to\infty}} \frac{H_n}{B_n} \bigg\}.
\end{align*}

It is obvious that $A \subset A_1$. By the same arguments as in \cite
{AlbeverioTorbin2005} we can easily check that
$A_1 \subset A_3$ and $A \subset A_2$.

Let $D= \varliminf_{{n \to\infty}}
\frac{H_n}{B_n}$.

From $A\subset A_2$ it follows that $ \dim_\mu(A,
\varPhi) \leq \dim_\mu(A_2, \varPhi)$. From Theorem~2.1 of \cite
{Billingsley61} it follows that
$ \dim_\mu(A_2, \varPhi) \le D$. Therefore,
$\dim_\mu(A, \varPhi) \le D$.

From the condition $ A \subset A_3$ and Theorem 2.2 of \cite
{Billingsley61} it follows that $ \dim_\mu
(A, \varPhi) \ge D \cdot\dim_\nu(A, \varPhi)= D \cdot1
= D$. Hence, $\dim_\mu(A, \varPhi)= D$.

Since $\mu$ is the Lebesgue measure on $[0,1]$, we get $\dim_H (A,
\varPhi)=\break
\dim_\mu(A, \varPhi)=D$. From (\ref{sufficient condition for Q*}) and
Theorem \ref{teorema 2 o ddoveritelnosti} it follows that the family
$\varPhi$ of cylinders of the $Q^*$-expansion is faithful for the
determination of the Hausdorff--Besicovitch dimension on the unit
interval, and, therefore,
$\dim_H(A, \varPhi)= \dim_H (A)$. Hence, $\dim_H(A)=D$.

Finally, let us prove that the constructed set $A$ is the minimal
dimensional support of the measure $\nu$. To this end, let us consider
an arbitrary support $C$ of the measure $\nu$. It is clear that the set
$C_1= C \cap A$ is also a support of the measure $\nu$ and that $C_1
\subset C$.
Then $\dim_H(C_1) \le\dim_H(C)$ and $C_1 \subset A$.

Let us prove that $\dim_H(C_1)= \dim_H(A)$.

From $C_1 \subset A$ it follows that $\dim_H(C_1) \le
\dim_H(A) = D$. On the other hand,
\[
C_1 \subset A
\subset A_3= \bigg\{ x: ~~ \varliminf_{{n \to\infty}}
\frac{ \ln\nu(\varDelta_n(x))}{\ln\mu(\varDelta_n(x))} \ge D
\bigg\}.
\]
Therefore, from Theorem 2.2 of \cite{Billingsley61} it follows that
\[
\dim_H(C_1) = \dim_\mu(C_1, \varPhi) \ge D \cdot
\dim_\nu(C_1, \varPhi) = D \cdot1 = D.
\]
So, $\dim_H(C_1)= D =\dim_H(A)$.
\end{proof}

\section*{Acknowledgments}
This work was partly supported by research projects ``Spectral
Structures and Topological Methods in Mathematics'' (SFB-701, Bielefeld
University), STREVCOM\break FP-7-IRSES 612669 (EU), ``Multilevel analysis of
singularly continuous probability measures and its applications''
(Ministry of Education and Science of Ukraine), and by Alexander von
Humboldt Stiftung.


%
\end{document}